\theoremstyle{plain}
\newtheorem{prop}[subsection]{Proposition}
\newtheorem{thm}[subsection]{Theorem}
\newtheorem{assumption}[subsection]{Basic Assumption}
\theoremstyle{definition}
\newtheorem{defn}[subsection]{Definition}
\theoremstyle{remark}
\newtheorem{rem}[subsection]{Remark}
\newcommand{\hh}{{ \mathsf{h} }}
\newcommand{\Ho}{{ \mathsf{Ho} }}
\newcommand{\ModR}{{ \mathsf{Mod}_\capR }}
\newcommand{\Spectra}{{ \mathsf{Sp}^\Sigma }}
\newcommand{\Alg}{{ \mathsf{Alg} }}
\newcommand{\TQ}{{ \mathsf{TQ} }}
\newcommand{\AlgO}{{ \Alg_\capO }}
\newcommand{\capO}{{ \mathcal{O} }}
\newcommand{\capR}{{ \mathcal{R} }}
\newcommand{\Nil}{{ \mathrm{Nil} }}
\newcommand{\id}{{ \mathrm{id} }}
\newcommand{\hwedge}{{ \hh\wedge }}
\newcommand{\Smash}{{ \,\wedge\, }}
\newcommand{\tensor}{{ \otimes }}
\newcommand{\wequiv}{{ \ \simeq \ }}
\newcommand{\Equal}{{ \ = \ }}
\newcommand{\function}[3]{{ {#1}\colon\thinspace{#2}\rightarrow{#3} }}
\DeclareMathOperator*{\holim}{holim}
\DeclareMathOperator{\BAR}{Bar}
\title[A nilpotent {W}hitehead theorem for $\TQ$-homology]{A nilpotent {W}hitehead theorem for $\TQ$-homology of structured ring spectra.}
\author{Michael Ching}
\author{John E. Harper}
\address{Department of Mathematics and Statistics, Amherst College, Amherst, MA, 01002, USA}
\email{mching@amherst.edu}
\address{Department of Mathematics, The Ohio State University, Newark, 1179 University Dr, Newark, OH 43055, USA}
\email{harper.903@math.osu.edu}
\begin{document}

\begin{abstract}
The aim of this short paper is to prove a $\TQ$-Whitehead theorem for nilpotent structured ring spectra. We work in the framework of symmetric spectra and algebras over operads in modules over a commutative ring spectrum. Our main result can be thought of as a $\TQ$-homology analog for structured ring spectra of Dror's generalized Whitehead theorem for topological spaces; here $\TQ$-homology is short for topological Quillen homology. We also prove retract theorems for the $\TQ$-completion and homotopy completion of nilpotent structured ring spectra.
\end{abstract}

\maketitle

\section{Introduction}

In this paper we work with algebraic structures in symmetric spectra, or $\capR$-modules, that can be described as algebras over an operad $\capO$ which is reduced, meaning that $\capO[0]=*$, the trivial $\capR$-module; such $\capO$-algebras are non-unital. Here, we assume that $\capR$ is any commutative monoid object in the category $(\Spectra,\tensor_S,S)$ of symmetric spectra \cite{Hovey_Shipley_Smith, Schwede_book_project}; in other words, $\capR$ is a commutative ring spectrum, and we denote by $(\ModR,\wedge,\capR)$ the closed symmetric monoidal category of $\capR$-modules.

Topological Quillen homology, or $\TQ$-homology for short, is the analog for $\capO$-algebras of the ordinary integral homology of spaces. There is an extensive literature, but it is useful to start with \cite{Goerss_f2_algebras, Miller, Quillen}, followed by \cite{Fresse_lie_theory, Fresse, Harper_bar_constructions, Rezk} along with \cite{Basterra, Basterra_Mandell, Basterra_Mandell_thh, Kuhn, Kuhn_adams_filtration, Kuhn_Pereira}, among others. A study of the homotopy completion tower, which is weakly equivalent to the Goodwillie Taylor tower \cite{Goodwillie_calculus_3, Kuhn_survey} of the identity functor on $\capO$-algebras (see, for instance, \cite{Harper_Hess, Kuhn,  McCarthy_Minasian_preprint, Pereira_spectral_operad}), is carried out in \cite{Harper_Hess}, where the $\TQ$-completion construction is introduced as the analog for $\capO$-algebras of the Bousfield-Kan \cite{Bousfield_Kan} integral completion of spaces; a study of $\TQ$-completion is carried out in \cite{Ching_Harper_derived_Koszul_duality, Ching_Harper} for connected $\capO$-algebras. The homotopy completion tower is exploited in \cite{Harper_Hess} to establish a $\TQ$-Serre Finiteness Theorem; also proved, using similar tools, are a $\TQ$-Hurewicz theorem and a $\TQ$-Whitehead theorem, which generalize earlier results in this direction to $\capO$-algebras. These are analogs of the Serre Finiteness, Hurewicz, and Whitehead theorems that are well-known for spaces; each of these theorems for $\TQ$-homology starts with the assumption that the $\capO$-algebra is connected. 

In this paper we explore the possibility of removing the connectivity assumption from an $\capO$-algebra, in the case of the $\TQ$-Whitehead theorem, by replacing ``connected $\capO$-algebra'' with ``nilpotent $\capO$-algebra''; this is motivated by Dror's generalized Whitehead theorem for spaces \cite{Dror_whitehead_theorem} which replaced ``simply connected space'' with ``nilpotent space'' in the original Whitehead theorem for spaces.

Our approach, which is somewhat indirect and was motivated by the work in \cite{Arone_Kankaanrinta}, is to play off the homotopy completion and $\TQ$-completion constructions against each other: more precisely, we resolve the cosimplicial $\TQ$-resolution of an $\capO$-algebra $X$, whose homotopy limit is the $\TQ$-completion of $X$, term-by-term via the homotopy completion tower, but with respect to the operad $\tau_{M-1}\capO$ characterizing $M$-nilpotent $\capO$-algebras (Remark \ref{rem:ease_of_notation}). We use this to establish a nilpotent $\TQ$-completion retract result (Theorem \ref{thm:nilpotent_TQ_completion_retract}), and as a consequence, we establish the following nilpotent $\TQ$-Whitehead theorem.

\begin{thm}[Nilpotent $\TQ$-Whitehead theorem]
\label{MainTheorem}
Let $\capO$ be an operad in $\capR$-modules with $\capO[0]=*$. Let $M\geq 2$ and suppose $X,Y$ are $M$-nilpotent $\capO$-algebras (Definition \ref{defn:nilpotent_O_algebra}). Consider any map $\function{f}{X}{Y}$ of $\capO$-algebras. Then $f$ is a weak equivalence if and only if $f$ induces a weak equivalence $\TQ(X)\wequiv\TQ(Y)$ on $\TQ$-homology. Here we are assuming that $\capO$ satisfies Assumption \ref{basic_assumption}.
\end{thm}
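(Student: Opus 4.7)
The forward direction is immediate since $\TQ$ preserves weak equivalences. My plan for the backward direction is to deduce the statement as a formal consequence of the nilpotent $\TQ$-completion retract theorem (Theorem \ref{thm:nilpotent_TQ_completion_retract}) referenced in the introduction, and then to sketch how to prove that retract theorem via the double-resolution strategy outlined there.

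Assuming the retract theorem in a sufficiently natural form, the Whitehead argument runs as follows. Suppose $\function{f}{X}{Y}$ induces a weak equivalence on $\TQ$-homology. By functoriality of $\TQ$-completion, the induced map $f^\wedge_{\TQ}\colon X^\wedge_{\TQ}\rightarrow Y^\wedge_{\TQ}$ is a weak equivalence. The retract theorem provides, for every $M$-nilpotent $\capO$-algebra $Z$, a retraction in $\Ho(\AlgO)$ of the coaugmentation $Z\rightarrow Z^\wedge_{\TQ}$. Taking the retractions to be natural in the $\capO$-algebra (a property that should follow from the functorial double-resolution construction below), combined with naturality of the coaugmentation itself, exhibits $f$ as a retract of $f^\wedge_{\TQ}$ in the arrow category of $\Ho(\AlgO)$. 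Hence $f$ is a weak equivalence, since weak equivalences are closed under retracts.

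To prove the retract theorem, my plan is to construct the retraction $X^\wedge_{\TQ}\rightarrow X$ using the double resolution flagged in the introduction. Recall that $X^\wedge_{\TQ}$ is the homotopy limit of the cosimplicial $\TQ$-resolution $C^\bullet(X)$, whose levels are free $\capO$-algebras on underlying spectra. Since $X$ is $M$-nilpotent, it is naturally an algebra over the truncated operad $\tau_{M-1}\capO$, and the homotopy completion tower for $\tau_{M-1}\capO$-algebras converges in finitely many stages. I would apply this finite tower term-by-term to $C^\bullet(X)$, and then inductively produce, at each stage of the tower, a splitting of the coaugmentation that is compatible with the cosimplicial structure; the compatibility obstructions at each stage live in $\TQ$-type groups controlled by the next layer of the tower. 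Assembling these stagewise splittings yields a retraction on $\Tot C^\bullet(X)\wequiv X^\wedge_{\TQ}$.

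The main obstacle will be controlling the interchange between the cosimplicial $\Tot$ (defining $\TQ$-completion) and the finite homotopy limit coming from the $\tau_{M-1}\capO$ tower, together with verifying that the stagewise splittings glue to a homotopy coherent map natural enough to drive the Whitehead consequence. The cosimplicial direction is infinite and delicate, but the nilpotence hypothesis forces the tower direction to be finite, which is precisely what makes the interchange tractable and allows the obstruction-theoretic induction along the tower to close.
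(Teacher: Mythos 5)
Your deduction of the Whitehead statement from the nilpotent $\TQ$-completion retract theorem is exactly the paper's argument: given that $\TQ(f)$ is a weak equivalence, the induced map $X^\wedge_\TQ\rightarrow Y^\wedge_\TQ$ is a weak equivalence (the completion being a homotopy limit of iterates of $\TQ$, each of which preserves weak equivalences), and the natural retraction exhibits $f$ as a retract of this map in $\Ho(\AlgO)$, hence an isomorphism there. If you are allowed to quote Theorem \ref{thm:nilpotent_TQ_completion_retract} as stated, this part is complete and essentially identical to the paper's proof.

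The gap is in your sketch of the retract theorem itself. You propose to apply the finite $\tau_{M-1}\capO$-tower to the plain $\TQ$-resolution $C^\bullet(X)$ and then to produce stagewise splittings of the coaugmentation by an obstruction-theoretic induction, with obstructions in ``$\TQ$-type groups.'' That is not the mechanism, and as stated there is no reason the obstructions vanish. The paper's proof contains no obstruction theory: it introduces the \emph{relative} completion $X^\wedge_{\TQ|_{\Nil_{M}}}$, built from the cosimplicial object with terms $J_1\circ_{J_N}\cdots J_1\circ_{J_N}(\tilde{X})$ (indecomposables relative to $J_N$-algebras, $N=M-1$), together with the natural comparison $X^\wedge_\TQ\rightarrow X^\wedge_{\TQ|_{\Nil_{M}}}$. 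The entire content is that the coaugmentation $X\rightarrow X^\wedge_{\TQ|_{\Nil_{M}}}$ is a weak equivalence, proved by resolving that cosimplicial object term-by-term by the finite tower $\{J_n\circ_{J_N}(-)\}_{1\leq n\leq N}$: the bottom layer and each homotopy fiber $F_n(-)$ of the tower factor through $\tau_1$, so the corresponding coaugmented cosimplicial objects admit extra codegeneracies induced by $J_N\rightarrow J_1$, whence their $\holim_\Delta$-coaugmentations are weak equivalences by cofinality; a five-lemma induction up the finite tower then finishes. The retraction of $X\rightarrow X^\wedge_\TQ$ is simply the composite $X^\wedge_\TQ\rightarrow X^\wedge_{\TQ|_{\Nil_{M}}}\xleftarrow{\wequiv}X$ in $\Ho(\AlgO)$. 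Your plan, applied to the plain resolution built from $J_1\circ_\capO$, lacks these extra codegeneracies (they exist only for the relative resolution), so the stagewise splittings you hope to construct are precisely what is in question; to make the induction start and propagate you would need to rediscover the relative completion and the extra-codegeneracy argument.
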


To keep this paper appropriately concise, we freely use notation from \cite{Harper_Hess}.

\subsection*{Acknowledgments}

The second author would like to thank Greg Arone, Bill Dwyer, and Emmanuel Farjoun for helpful remarks at an early stage of this project.  The authors would like to thank Andy Baker and Birgit Richter for the opportunity to announce these results at the  2011 Conference on Structured Ring Spectra in Hamburg, and an anonymous referee for helpful comments and suggestions. The first author was supported in part by National Science Foundation grant DMS-1308933.

\section{Nilpotent structured ring spectra}

The following basic assumption ensures, in particular, that the forgetful functor from $\capO$-algebras to the underlying category of $\capR$-modules preserves cofibrant objects; see, for instance, \cite{Pavlov_Scholbach_symmetric_powers, Pereira, Shipley_commutative_ring_spectra}.

\begin{assumption}
\label{basic_assumption}
Let $\capO$ be an operad in $\capR$-modules and consider the associated unit map $I\rightarrow\capO$. Assume that $I[r]\rightarrow\capO[r]$ is a flat stable cofibration between flat stable cofibrant objects in $\ModR$ for each $r\geq 0$ (\cite[7.7]{Harper_Hess}). 
\end{assumption}

Recall from \cite{Harper_Hess} that associated to the operad $\capO$ is the tower of operads 
\begin{align}
\label{eq:canonical_tower_of_operads}
  \tau_1\capO \leftarrow 
  \tau_2\capO \leftarrow \cdots \leftarrow
  \tau_{n-1}\capO \leftarrow 
  \tau_{n}\capO \leftarrow \cdots
\end{align}
where
\begin{align*}
  (\tau_n\capO)[t]:=
  \left\{
  \begin{array}{rl}
    \capO[t],&\text{for $t\leq n$,}\\
    *,&\text{otherwise},
  \end{array}
  \right.
\end{align*}
In other words, $\tau_n\capO$ is the operad built from $\capO$ by truncating $\capO$ above level $n$, where $*$ denotes the trivial $\capR$-module. If $X$ is an $\capO$-algebra, then the tower  \eqref{eq:canonical_tower_of_operads} of operads induces the \emph{completion tower} \cite[3.8]{Harper_Hess} of $X$
\begin{align}
\label{eq:completion_tower}
  \tau_1\capO\circ_\capO(X) \leftarrow 
  \tau_2\capO\circ_\capO(X) \leftarrow \cdots \leftarrow
  \tau_{n-1}\capO\circ_\capO(X) \leftarrow 
  \tau_{n}\capO\circ_\capO(X) \leftarrow \cdots
\end{align}
of $\capO$-algebras whose limit is the \emph{completion} $X^\wedge$ of $X$. To make this construction homotopy meaningful, one can derive everything in sight by constructing the \emph{homtopy completion tower} \cite[3.13]{Harper_Hess} of $X$
\begin{align}
\label{eq:homotopy_completion_tower}
  \tau_1\capO\circ^\hh_\capO(X) \leftarrow 
  \tau_2\capO\circ^\hh_\capO(X) \leftarrow \cdots \leftarrow
  \tau_{n-1}\capO\circ^\hh_\capO(X) \leftarrow 
  \tau_{n}\capO\circ^\hh_\capO(X) \leftarrow \cdots
\end{align}
of $\capO$-algebras whose homotopy limit is the \emph{homotopy completion} $X^\hwedge$ of $X$.

\begin{rem}
\label{rem:completion_tower}
For instance, if $\capO$ is the operad whose algebras are the non-unital commutative algebra spectra (i.e., where $\capO[t]=\capR$ for each $t\geq 1$ and $\capO[0]=*$), then the tower \eqref{eq:completion_tower} is isomorphic to the usual $X$-adic completion of $X$ tower of the form
\begin{align}
\label{eq:X_adic_completion_tower}
  X/X^2\leftarrow X/X^3\leftarrow \cdots \leftarrow X/X^{n} \leftarrow X/X^{n+1}\leftarrow \cdots
\end{align}
where $X/X^n$ denotes the $\capO$-algebra with underlying $\capR$-module defined by the pushout diagram $(n\geq 2)$
\begin{align*}
\xymatrix{
  X^{\wedge n}\ar[r]^-{m_n}\ar[d] & X\ar[d]\\
  {*}\ar[r] & X/X^n
}
\end{align*}
in $\capR$-modules; here, $m_n$ denotes the usual multiplication map.
\end{rem}

\begin{defn}
\label{defn:nilpotent_O_algebra}
Let $X$ be an $\capO$-algebra and $M\geq 2$. We say that $X$ is \emph{$M$-nilpotent} if the map of $t$-ary operations $\capO[t]\Smash X^{\wedge t}\rightarrow X$ factors as
\begin{align*}
\capO[t]\Smash X^{\wedge t}\rightarrow *\rightarrow X
\quad\quad\text{for each $t\geq M$,}
\end{align*}
where $*$ denotes the trivial $\capR$-module; in other words, if all the $M$-ary operations of $X$ and higher are trivial.
\end{defn}

\begin{rem}
Each term $X/X^n$ in \eqref{eq:X_adic_completion_tower} is an $n$-nilpotent non-unital commutative algebra spectrum, and similarly, each term $\tau_{n}\capO\circ_\capO(X)$ in the completion tower \eqref{eq:completion_tower} is an $(n+1)$-nilpotent $\capO$-algebra.
\end{rem}

\begin{rem}
\label{rem:ease_of_notation}
Since $M$-nilpotent $\capO$-algebras ($M\geq 2$) are the same as $\tau_{M-1}\capO$-algebras equipped with $\capO$-action maps (Proposition \ref{prop:truncated_O_algebras}), for ease of notational purposes (in the various subscripts) we often state assumptions in terms of $(N+1)$-nilpotent $\capO$-algebras $(N\geq 1)$.
\end{rem}

\begin{prop}
\label{prop:truncated_O_algebras}
If $X$ is an $\capO$-algebra and $N\geq 1$, then $X$ is $(N+1)$-nilpotent if and only if the left $\capO$-action map $\capO\circ(X)\rightarrow X$ factors as
\begin{align*}
  \capO\circ(X)\rightarrow (\tau_N\capO)\circ(X)\rightarrow X
\end{align*}
through a $\tau_N\capO$-algebra structure on $X$. In other words, $(N+1)$-nilpotent $\capO$-algebras are the same as $\tau_N\capO$-algebras equipped with $\capO$-action maps induced by restriction along $\capO\rightarrow\tau_N\capO$.
\end{prop}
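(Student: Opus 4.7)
The plan is to exploit the standard decomposition of the operadic composition product
\[
\capO\circ(X)\Iso \coprod_{t\geq 0}\capO[t]\wedge_{\Sigma_t} X^{\wedge t},
\]
together with the analogous formula for $(\tau_N\capO)\circ(X)$ in which the summands with $t\geq N+1$ are collapsed to the trivial $\capR$-module $*$. Under these identifications, the natural map $\capO\circ(X)\to(\tau_N\capO)\circ(X)$ induced by the projection of operads $\capO\to\tau_N\capO$ is simply the quotient that kills the arity $\geq N+1$ summands, while the left $\capO$-action $\capO\circ(X)\to X$ restricts on the arity-$t$ summand to precisely the $t$-ary operation map $\capO[t]\wedge X^{\wedge t}\to X$ of Definition \ref{defn:nilpotent_O_algebra}.

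The reverse direction of the equivalence is then immediate: if the action map factors through $(\tau_N\capO)\circ(X)$, then for each $t\geq N+1$ its restriction to the arity-$t$ summand automatically factors through $(\tau_N\capO)[t]\wedge_{\Sigma_t} X^{\wedge t} = *$, which is exactly the $(N+1)$-nilpotence condition.

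For the forward direction, $(N+1)$-nilpotence produces arity-wise factorizations of each $t$-ary operation map, and assembling these over all $t\geq 0$ yields a single $\capR$-module map $\ol{m}\colon(\tau_N\capO)\circ(X)\to X$ through which the $\capO$-action factors. I would then verify that $\ol{m}$ is a bona fide $\tau_N\capO$-algebra structure. The unit axiom follows immediately from the compatibility of the unit maps $I\to\capO\to\tau_N\capO$. The associativity axiom requires commutativity of
\[
\xymatrix{
(\tau_N\capO)\circ(\tau_N\capO)\circ(X)\ar[r]\ar[d] & (\tau_N\capO)\circ(X)\ar[d]^-{\ol{m}}\\
(\tau_N\capO)\circ(X)\ar[r]^-{\ol{m}} & X
}
\]
which I would deduce from the original $\capO$-algebra associativity for $X$ by precomposing with the surjection $\capO\circ\capO\circ(X)\twoheadrightarrow(\tau_N\capO)\circ(\tau_N\capO)\circ(X)$ and invoking the compatibility of $\tau_{(-)}$ with the composition product $\circ$. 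The main obstacle here is essentially bookkeeping: ensuring that the arity-graded pieces of the iterated composition products line up correctly so that the arity-wise factorizations supplied by the nilpotence hypothesis assemble into an operad-algebra map rather than merely a map of $\capR$-modules.
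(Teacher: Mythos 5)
Your proposal is correct and is essentially a fleshed-out version of the paper's own (one-line) proof, which simply observes that the claim follows immediately from Definition \ref{defn:nilpotent_O_algebra} by unwinding the arity-graded decomposition of $\capO\circ(X)$ exactly as you do. The extra care you take with the unit and associativity axioms (via the epimorphism $\capO\circ\capO\circ(X)\rightarrow(\tau_N\capO)\circ(\tau_N\capO)\circ(X)$ and the fact that $\capO\rightarrow\tau_N\capO$ is a map of operads) is sound and just makes explicit what the paper leaves to the reader.
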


\begin{proof}
This follows immediately from Definition \ref{defn:nilpotent_O_algebra}.
\end{proof}

\begin{rem}
It is natural to ask, if we start with a small diagram of $\tau_N\capO$-algebras, and we compute its limit (resp. colimit) in $\capO$-algebras versus in $\tau_N\capO$-algebras, when are they isomorphic? Since all small limits, filtered colimits, and reflexive coequalizers are created in the underlying category of $\capR$-modules by the forgetful functor (see, for instance, \cite{Harper_symmetric_spectra, Harper_modules_over_operads}), for both $\capO$-algebras and $\tau_N\capO$-algebras, it follows that $(N+1)$ nilpotent $\capO$-algebras are closed under all small limits, filtered colimits, and reflexive coequalizers. On the other hand, since cofibrant replacement of an $(N+1)$-nilpotent $\capO$-algebra, as an $\capO$-algebra, involves gluing on cells of $\capO$-algebras that are not nilpotent, in general, a left derived functor evaluated on a fixed $(N+1)$-nilpotent $\capO$-algebra will often take on different values depending on whether it is left derived on $\capO$-algebras versus on $\tau_N\capO$-algebras. Note that since the category of $\tau_1\capO$-algebras is isomorphic to the category of left $\capO[1]$-modules \cite{Harper_Hess}, $2$-nilpotent $\capO$-algebras are the same as left $\capO[1]$-modules equipped with a trivial $\capO$-algebra structure induced by restriction along $\capO\rightarrow\tau_1\capO$.
\end{rem}

If $N\geq 1$, repeated application of the functorial factorization construction in \cite[5.48]{Harper_Hess} shows that the upper horizontal finite tower in \eqref{eq:fattening_up_the_truncation_tower} may be factored as
\begin{align}
\label{eq:fattening_up_the_truncation_tower}
\xymatrix{
  \capO\ar[r]\ar@/_1pc/[dr]^(0.3){(*)} & 
  \tau_N\capO\ar[r] & 
  \tau_{N-1}\capO\ar[r] & 
  \cdots\ar[r] & 
  \tau_2\capO\ar[r] & 
  \tau_1\capO\\
  & J_N\ar[u]^-{(\#)}_-{\wequiv}\ar[r]^-{(*)} & 
  J_{N-1}\ar[u]^-{(\#)}_-{\wequiv}\ar[r]\ar[r]^-{(*)} & 
  \cdots\ar[r]\ar[r]^-{(*)} & 
  J_2\ar[u]^-{(\#)}_-{\wequiv}\ar[r]\ar[r]^-{(*)} & 
  J_1\ar[u]^-{(\#)}_-{\wequiv}
}
\end{align}
where the maps $(*)$ are cofibrations of operads, the maps $(\#)$ are weak equivalences of operads, and $J_n$ is reduced for each $1\leq n\leq N$.

The following is proved in \cite[5.49(b)]{Harper_Hess}.

\begin{prop}
Let $1\leq n < N$. If $X\in\Alg_{J_N}$ is cofibrant, then X is also cofibrant in $\AlgO$. Similarly, if $X\in\Alg_{J_n}$ is cofibrant, then $X$ is also cofibrant in $\Alg_{J_{n+1}}$.
\end{prop}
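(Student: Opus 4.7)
The plan is to establish a general principle --- that restriction of algebras along a cofibration $f\colon\capP\to\capP'$ of reduced operads in $\ModR$ (satisfying Assumption \ref{basic_assumption}) preserves cofibrant objects --- and then apply it to the cofibrations $\capO\to J_N$ and $J_{n+1}\to J_n$ that appear as the $(*)$-arrows in the diagram \eqref{eq:fattening_up_the_truncation_tower}. Both statements of the proposition follow immediately.

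For the general principle, I would proceed by a cell-and-retract argument. Any cofibrant $\capP'$-algebra $X$ is a retract of a cell $\capP'$-algebra built from the initial object $*$ by a transfinite composition of pushouts, in $\Alg_{\capP'}$, along free cell maps of the form $\capP'\circ(\partial A)\to\capP'\circ(A)$ with $\partial A\to A$ a generating cofibration of $\ModR$. Since the forgetful functor to $\ModR$ creates filtered colimits and reflexive coequalizers for both $\Alg_\capP$ and $\Alg_{\capP'}$, these pushouts and composites are computed by the same underlying data; moreover, cofibrations in $\Alg_\capP$ are closed under retracts, transfinite composition, and pushouts. It therefore suffices to verify that each cell attachment $X_\alpha\to X_{\alpha+1}$ is a cofibration of underlying $\capP$-algebras.

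The key step is to exploit the cellular structure of the cofibration of operads $\capP\to\capP'$ to induce a filtration of the free $\capP'$-algebra $\capP'\circ(A)$ by pushouts of $\capP$-algebra cells of the form $\capP\circ(\partial B)\to\capP\circ(B)$; the relevant analysis is precisely the pushout-filtration argument carried out in \cite[5.48]{Harper_Hess}. Under Assumption \ref{basic_assumption}, the flatness hypothesis ensures that the $\Sigma_t$-coinvariant smash products $(-)\wedge_{\Sigma_t}A^{\wedge t}$ preserve the relevant cofibrations, so each stage of this induced filtration is a cofibration of $\capP$-algebras. Combined with the pushout defining $X_{\alpha+1}$, this yields the desired cofibrancy. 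The main obstacle is precisely this bookkeeping --- transporting a cell structure of $\capP'$ over $\capP$ into a cell structure on free $\capP'$-algebras viewed as $\capP$-algebras --- which is carried out in \cite[5.49(b)]{Harper_Hess}. The same argument applied to each of the cofibrations $J_{n+1}\to J_n$ gives the second assertion.
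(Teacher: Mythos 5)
The paper offers no argument of its own for this proposition---it simply cites \cite[5.49(b)]{Harper_Hess}---and your cell-and-retract outline, applied to the cofibrations of operads $(*)$ in diagram \eqref{eq:fattening_up_the_truncation_tower}, is essentially a sketch of that cited proof, so the two approaches agree. Note only that your closing paragraph defers the key filtration bookkeeping back to \cite[5.49(b)]{Harper_Hess}, which is the very statement being proved, so what you have is a correct outline resting on that reference rather than a self-contained argument.
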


If $X$ is an $(N+1)$-nilpotent $\capO$-algebra, then its $\capO$-algebra structure is induced from its underlying $\tau_N\capO$-algebra structure via the composite map
\begin{align*}
  \capO\circ(X)\rightarrow\tau_N\capO\circ(X)\rightarrow X.
\end{align*}
Hence we may start with any $(N+1)$-nilpotent $\capO$-algebra X, and consider the functorial cofibrant replacement
\begin{align}
\label{eq:cofibrant_replacement_tilde_in_JN_algebras}
  *\rightarrow \tilde{X} \xrightarrow{\wequiv} X
\end{align}
of $X$ in $\Alg_{J_N}$, which is also a cofibrant replacement of $X$ in $\AlgO$; we are using the positive flat stable model structure (see, for instance, \cite{Harper_symmetric_spectra, Harper_Hess}).

\begin{defn}
If $X$ is an $(N+1)$-nilpotent $\capO$-algebra, then its \emph{$\TQ$--homology} (resp. \emph{$\TQ|_{\Nil_{N+1}}$--homology}) is the $\capO$-algebra 
\begin{align*}
  \TQ(X)&:=\tau_1\capO\circ^h_\capO(X)\wequiv J_1\circ_\capO(\tilde{X})\\
  \text{resp.}\quad\quad
  \TQ|_{\Nil_{N+1}}(X)&:=\tau_1\capO\circ^h_{\tau_N\capO}(X)\wequiv J_1\circ_{J_N}(\tilde{X})
\end{align*}
Here, ``$\TQ$--homology'' is an abbreviation for ``topological Quillen homology''. 
\end{defn}

\begin{rem}
In other words, $\TQ|_{\Nil_{N+1}}(X)$ is the topological Quillen homology of $X$ (i.e., the derived indecomposable quotient of $X$) with respect to  $\tau_N\capO$-algebras, while $\TQ(X)$ is the the derived indecomposable quotient of $X$ with respect to $\capO$-algebras; it may be helpful to note that $\tau_1\tau_N\capO=\tau_1\capO$. The indicated weak equivalences are consequences of \cite[4.10]{Harper_Hess}.
\end{rem}

This construction comes with a $\TQ$--Hurewicz map of the form $X\rightarrow\TQ(X)$, given by the map $\tilde{X}\rightarrow J_1\circ_\capO(\tilde{X})$ in $\AlgO$, and iterating this Hurewicz map leads to a cosimplicial resolution of $X$ with respect to $\TQ$-homology of the form
\begin{align*}
\xymatrix{
  X\ar[r] & 
  \TQ(X)\ar@<-0.5ex>[r]\ar@<0.5ex>[r] & 
  (\TQ)^2 (X)
  \ar@<-1.0ex>[r]\ar[r]\ar@<1.0ex>[r] &
  (\TQ)^3 (X)\cdots
}
\end{align*}
(see Remark \ref{rem:suppression_of_codegeneracy_maps}) given by the map of cosimplicial $\capO$-algebras
\begin{align}
\label{eq:point_set_TQ_resolution}
\xymatrix{
  \tilde{X}\ar[r] &
  J_1\circ_{\capO}(\tilde{X})\ar@<-0.5ex>[r]\ar@<0.5ex>[r] & 
  J_1\circ_{\capO} J_1\circ_{\capO}(\tilde{X})
  \ar@<-1.0ex>[r]\ar[r]\ar@<1.0ex>[r] &
  \cdots
}
\end{align}
where $\tilde{X}$ is regarded as a constant cosimplicial diagram. Recall \cite{Ching_Harper_derived_Koszul_duality, Harper_Hess} that applying $\holim_\Delta(-)$ to the map of cosimplicial $\capO$-algebras in \eqref{eq:point_set_TQ_resolution} gives the $\TQ$-completion map of the form $X\rightarrow X^\wedge_\TQ$.

\begin{rem}
\label{rem:suppression_of_codegeneracy_maps}
Throughout this paper we have suppressed the codegeneracy maps from the diagrams, for ease of notational purposes.
\end{rem}

Similarly, there is a $\TQ|_{\Nil_{N+1}}$--Hurewicz map of the form $X\rightarrow\TQ|_{\Nil_{N+1}}(X)$, given by the map $\tilde{X}\rightarrow J_1\circ_{J_N}(\tilde{X})$ in $\AlgO$, and iterating this Hurewicz map leads to a cosimplicial resolution of $X$ with respect to $\TQ|_{\Nil_{N+1}}$-homology of the form
\begin{align*}
\xymatrix{
  X\ar[r] & 
  \TQ|_{\Nil_{N+1}}(X)\ar@<-0.5ex>[r]\ar@<0.5ex>[r] & 
  (\TQ|_{\Nil_{N+1}})^2 (X)
  \ar@<-1.0ex>[r]\ar[r]\ar@<1.0ex>[r] &
  (\TQ|_{\Nil_{N+1}})^3 (X)\cdots
}
\end{align*}
given by the map of cosimplicial $J_N$-algebras (and by restriction along $\capO\rightarrow J_N$, also a map of cosimplicial $\capO$-algebras)
\begin{align}
\label{eq:point_set_TQ_Nil_resolution}
\xymatrix{
  \tilde{X}\ar[r] &
  J_1\circ_{J_N}(\tilde{X})\ar@<-0.5ex>[r]\ar@<0.5ex>[r] & 
  J_1\circ_{J_N} J_1\circ_{J_N}(\tilde{X})
  \ar@<-1.0ex>[r]\ar[r]\ar@<1.0ex>[r] &
  \cdots
}
\end{align}
where $\tilde{X}$ is regarded as a constant cosimplicial diagram, and applying $\holim_\Delta(-)$ to the map of cosimplicial $\capO$-algebras  gives the $\TQ|_{\Nil_{N+1}}$-completion map of the form $X\rightarrow X^\wedge_{\TQ|_{\Nil_{N+1}}}$.

The map of operads $\capO\rightarrow J_N$ induces a commutative diagram of the form
\begin{align*}
\xymatrix{
  \tilde{X}\ar[r]\ar@{=}[d] &
  J_1\circ_{\capO}(\tilde{X})\ar@<-0.5ex>[r]\ar@<0.5ex>[r]\ar[d] & 
  J_1\circ_{\capO} J_1\circ_{\capO}(\tilde{X})
  \ar@<-1.0ex>[r]\ar[r]\ar@<1.0ex>[r]\ar[d] &
  \cdots\\
  \tilde{X}\ar[r] &
  J_1\circ_{J_N}(\tilde{X})\ar@<-0.5ex>[r]\ar@<0.5ex>[r] & 
  J_1\circ_{J_N} J_1\circ_{J_N}(\tilde{X})
  \ar@<-1.0ex>[r]\ar[r]\ar@<1.0ex>[r] &
  \cdots
}
\end{align*}
and applying $\holim_\Delta(-)$ gives the left-hand commutative diagram of the form \eqref{eq:map_of_TQ_completion_maps}.

\begin{thm}[Nilpotent $\TQ$-completion retract theorem]
\label{thm:nilpotent_TQ_completion_retract}
Let $\capO$ be an operad in $\capR$-modules with $\capO[0]=*$. Let $N\geq 1$. If $X$ is an $(N+1)$-nilpotent $\capO$-algebra, then there is 
a left-hand commutative diagram of the form
\begin{align}
\label{eq:map_of_TQ_completion_maps}
\xymatrix{
  X\ar[r]\ar@{=}[d] & X^\wedge_\TQ\quad\quad\quad\ar@<-4ex>[d]\\
  X\ar[r]^-{(*)}_-{\wequiv} & X^\wedge_{\TQ|_{\Nil_{N+1}}}
}\quad\quad
\xymatrix{
\\
  X\ar[r]\ar@/^1.5pc/[rr]^-{\id} & X^\wedge_\TQ\ar[r] & X
}
\end{align}
in $\AlgO$ with natural map $(*)$ a weak equivalence. In particular, the $\TQ$-completion map $X\rightarrow X^\wedge_\TQ$ fits into the right-hand retract diagram in the homotopy category $\Ho(\AlgO)$, natural in $X$. Here we are assuming that $\capO$ satisfies Assumption \ref{basic_assumption}.
\end{thm}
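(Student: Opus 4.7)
The plan is to first prove that the natural map $(*): X \to X^\wedge_{\TQ|_{\Nil_{N+1}}}$ in the left-hand diagram of \eqref{eq:map_of_TQ_completion_maps} is a weak equivalence whenever $X$ is $(N+1)$-nilpotent. Once this is established, the right-hand retract diagram in $\Ho(\AlgO)$ is immediate: composing the natural comparison map $X^\wedge_\TQ \to X^\wedge_{\TQ|_{\Nil_{N+1}}}$ with an inverse of $(*)$ in $\Ho(\AlgO)$ yields a retraction $X^\wedge_\TQ \to X$ of the $\TQ$-completion map $X \to X^\wedge_\TQ$, and commutativity of the left-hand diagram guarantees that the composite $X \to X^\wedge_\TQ \to X$ is the identity in $\Ho(\AlgO)$, natural in $X$.

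To show $(*)$ is a weak equivalence, the strategy is to view $X$ as a $J_N$-algebra (via $J_N \wequiv \tau_N\capO$ together with Proposition \ref{prop:truncated_O_algebras}), so that the defining cosimplicial object $(J_1\circ_{J_N})^{\bullet+1}(\tilde X)$ lives in $\Alg_{J_N}$, and to analyze it by means of the homotopy completion tower $\{\tau_n J_N \circ^\hh_{J_N}(-)\}_{n\geq 1}$ applied in each cosimplicial degree. The essential observation is that $J_N \wequiv \tau_N\capO$ is concentrated in arities $\leq N$, so $\tau_n J_N \wequiv J_N$ for all $n\geq N$. Consequently, for any $J_N$-algebra $Y$, the tower $\{\tau_n J_N \circ^\hh_{J_N}(Y)\}_n$ stabilizes up to weak equivalence at stage $N$ to $Y$ itself, and in particular the homotopy completion map $Y \to Y^\hwedge$ with respect to $J_N$ is a weak equivalence.

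Applying this to each cosimplicial degree $(J_1\circ_{J_N})^{k+1}(\tilde X)$, which is a $J_N$-algebra, gives a tower of cosimplicial resolutions whose homotopy limit in $n$ recovers the original cosimplicial object term-by-term. Granting (i) that for each fixed $n\geq 1$, applying $\holim_\Delta$ to the cosimplicial object $\tau_n J_N\circ^\hh_{J_N}\bigl((J_1\circ_{J_N})^{\bullet+1}(\tilde X)\bigr)$ recovers $\tau_n J_N\circ^\hh_{J_N}(\tilde X)$, and (ii) the standard commutation of homotopy limits, one concludes
\begin{align*}
\holim_\Delta (J_1\circ_{J_N})^{\bullet+1}(\tilde X)
&\wequiv \holim_n \holim_\Delta \tau_n J_N \circ^\hh_{J_N}\bigl((J_1\circ_{J_N})^{\bullet+1}(\tilde X)\bigr) \\
&\wequiv \holim_n \tau_n J_N \circ^\hh_{J_N}(\tilde X) \wequiv \tilde X,
\end{align*}
which is the desired weak equivalence.

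The main obstacle is step (i): proving, for each $n$, that $\holim_\Delta$ of the cosimplicial object $\tau_n J_N \circ^\hh_{J_N}\bigl((J_1\circ_{J_N})^{\bullet+1}(\tilde X)\bigr)$ is equivalent to $\tau_n J_N\circ^\hh_{J_N}(\tilde X)$. The natural approach is induction on $n$. The base case $n=1$ reduces to computing $\holim_\Delta$ of the cotriple cosimplicial object $(J_1\circ_{J_N})^{\bullet+2}(\tilde X)$, which admits an extra codegeneracy coming from the outer $J_1\circ_{J_N}$ and hence has homotopy limit $J_1\circ_{J_N}(\tilde X)\wequiv \tau_1 J_N\circ^\hh_{J_N}(\tilde X)$. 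For the inductive step one analyzes the homotopy fibres of $\tau_n J_N\circ^\hh_{J_N}(-)\to\tau_{n-1}J_N\circ^\hh_{J_N}(-)$, which are expressible in terms of the linear part $J_1\circ_{J_N}(-)$ applied to symmetric smash powers indexed by $J_N[n]$, and verifies that $\holim_\Delta$ commutes with these fibre sequences and the relevant symmetric-smash constructions; the principal technical difficulty, and what will require the careful ``term-by-term'' analysis alluded to in the introduction, is confirming this compatibility without any connectivity hypothesis on $X$.
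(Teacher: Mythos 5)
Your overall architecture matches the paper's: reduce everything to showing $(*)$ is a weak equivalence, then resolve the cosimplicial $\TQ|_{\Nil_{N+1}}$-resolution term-by-term by the finite homotopy completion tower relative to $J_N$ (the paper uses the naturally equivalent tower $\{J_n\circ_{J_N}(-)\}_{1\leq n\leq N}$, whose top term is $J_N\circ_{J_N}(\tilde X)\iso\tilde X$, so no interchange of $\holim_n$ with $\holim_\Delta$ is actually required), with the base case $n=1$ handled exactly as you describe, via the extra codegeneracies induced by $J_N\rightarrow J_1$ and cofinality.

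The gap is in your inductive step. You propose to verify that $\holim_\Delta$ ``commutes with \ldots the relevant symmetric-smash constructions,'' and you correctly flag this as the principal difficulty in the absence of connectivity hypotheses; but that is precisely the kind of statement one cannot expect to hold for non-connected objects (it is the reason $\TQ$-completion convergence is delicate in general), and it is not needed. The observation you are missing is that the $n$-th layer
\begin{align*}
  F_n(Z)\wequiv i_nJ_N\circ^\hh_{\tau_1J_N}\bigl(\tau_1J_1\circ_{J_1}J_1\circ_{J_N}(Z)\bigr)
\end{align*}
of the tower depends on $Z$ only through $J_1\circ_{J_N}(Z)$. Consequently the coaugmented cosimplicial object $F_n(\tilde X)\rightarrow F_n\bigl((J_1\circ_{J_N})^{\bullet+1}(\tilde X)\bigr)$ is a fixed functor applied levelwise to the shifted coaugmented cosimplicial object $J_1\circ_{J_N}(\tilde X)\rightarrow J_1\circ_{J_N}\bigl((J_1\circ_{J_N})^{\bullet+1}(\tilde X)\bigr)$, which carries extra codegeneracies $s^{-1}$; extra codegeneracies are preserved by applying any functor, so by cofinality $\holim_\Delta$ of the layer diagram is its coaugmentation $F_n(\tilde X)$, with no commutation of $\holim_\Delta$ past smash powers. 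Given that, the inductive step is just the five lemma applied to the long exact sequences of the homotopy fiber sequences obtained by applying $\holim_\Delta$ (which does preserve homotopy fiber sequences) levelwise to $F_n\rightarrow J_n\circ_{J_N}\rightarrow J_{n-1}\circ_{J_N}$. So your argument closes once you replace ``commute $\holim_\Delta$ with the smash powers'' by ``the layers inherit extra codegeneracies because they factor through $J_1\circ_{J_N}(-)$.''
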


The purpose of the rest of this section is to prove Theorems \ref{thm:nilpotent_TQ_completion_retract} and \ref{MainTheorem}. Let's verify that the comparison map $(*)$ is a weak equivalence. Our approach is to resolve the coaugmented cosimplicial diagram \eqref{eq:point_set_TQ_Nil_resolution} of $J_N$-algebras term-by-term with the finite tower $\{J_n\circ_{J_N}(-)\}_n$, where $1\leq n\leq N$; we are motivated by the work in \cite{Arone_Kankaanrinta}. This leads to the commutative diagram in $\AlgO$ of the form
\begin{align*}
\xymatrix{
  \tilde{X}\ar[r]^-{(\#)}\ar@{=}[d] &
  J_1\circ_{J_N}(\tilde{X})\ar@<-0.5ex>[r]\ar@<0.5ex>[r]\ar@{=}[d] & 
  J_1\circ_{J_N} J_1\circ_{J_N}(\tilde{X})
  \ar@<-1.0ex>[r]\ar[r]\ar@<1.0ex>[r]\ar@{=}[d] &
  \cdots\\
  J_N\circ_{J_N}(\tilde{X})\ar[r]^-{(\#)_N}\ar[d] &
  J_N\circ_{J_N}\bigl(J_1\circ_{J_N}(\tilde{X})\bigr)\ar@<-0.5ex>[r]\ar@<0.5ex>[r]\ar[d] & 
  J_N\circ_{J_N}\bigl(J_1\circ_{J_N} J_1\circ_{J_N}(\tilde{X})\bigr)
  \ar@<-1.0ex>[r]\ar[r]\ar@<1.0ex>[r]\ar[d] &
  \cdots\\
  \vdots\ar[d] &
  \vdots\ar[d] & 
  \vdots\ar[d] &
  \cdots\\
  J_2\circ_{J_N}(\tilde{X})\ar[r]^-{(\#)_2}\ar[d] &
  J_2\circ_{J_N}\bigl(J_1\circ_{J_N}(\tilde{X})\bigr)\ar@<-0.5ex>[r]\ar@<0.5ex>[r]\ar[d] & 
  J_2\circ_{J_N}\bigl(J_1\circ_{J_N} J_1\circ_{J_N}(\tilde{X})\bigr)
  \ar@<-1.0ex>[r]\ar[r]\ar@<1.0ex>[r]\ar[d] &
  \cdots\\
  J_1\circ_{J_N}(\tilde{X})\ar[r]^-{(\#)_1} &
  J_1\circ_{J_N}\bigl(J_1\circ_{J_N}(\tilde{X})\bigr)\ar@<-0.5ex>[r]\ar@<0.5ex>[r] & 
  J_1\circ_{J_N}\bigl(J_1\circ_{J_N} J_1\circ_{J_N}(\tilde{X})\bigr)
  \ar@<-1.0ex>[r]\ar[r]\ar@<1.0ex>[r] &
  \cdots
}
\end{align*}

We will show by induction up the finite tower that $\holim_\Delta(\#)_n$ is a weak equivalence for each $1\leq n\leq N$. Since the bottom horizontal coaugmented cosimplicial diagram has extra codegeneracy maps $s^{-1}$ (\cite[6.2]{Dwyer_Miller_Neisendorfer}) induced by the map $J_N\rightarrow J_1$, by cofinality \cite[3.16]{Dror_Dwyer_long_homology} it follows that $\holim_\Delta(\#)_1$ is a weak equivalence. To make further progress, we must identify the homotopy fibers of these towers.

\begin{prop}
Let $Z$ be a cofibrant $J_N$-algebra. There is a zigzag of weak equivalences of towers of the form
\begin{align*}
  \{\tau_nJ_N\circ_{J_N}(Z)\}_n\wequiv
  \{J_n\circ_{J_N}(Z)\}_n
\end{align*}
natural with respect to all such $Z$; here, $1\leq n\leq N$.
\end{prop}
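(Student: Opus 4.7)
The plan is to interpolate through the truncation operad $\tau_n\capO$. Specifically, for each $1 \leq n \leq N$ I would exhibit a zigzag of weak equivalences of operads under $J_N$ of the form
\begin{align*}
  \tau_n J_N \xrightarrow{(a)} \tau_n\capO \xleftarrow{(b)} J_n,
\end{align*}
where map $(b)$ is the weak equivalence supplied by the factorization diagram \eqref{eq:fattening_up_the_truncation_tower}, and map $(a)$ is a level-wise weak equivalence: at level $t\leq n$ it agrees with the component $J_N[t] \to \capO[t]$ of $J_N \xrightarrow{\wequiv} \tau_N\capO$ (which is a weak equivalence since $t \leq n \leq N$), while at level $t > n$ both sides are the trivial $\capR$-module $*$. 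That this zigzag lives \emph{under} $J_N$ follows from the universal property of $\tau_n$ together with the commuting squares of \eqref{eq:fattening_up_the_truncation_tower}: the two composites $J_N \to J_n \to \tau_n\capO$ and $J_N \to \tau_N\capO \to \tau_n\capO$ that result from the span are equal and the latter factors through $\tau_n J_N$.

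Next I would apply $(-)\circ_{J_N}(Z)$ termwise to obtain
\begin{align*}
  \tau_n J_N \circ_{J_N}(Z) \longrightarrow \tau_n\capO \circ_{J_N}(Z) \longleftarrow J_n \circ_{J_N}(Z).
\end{align*}
To conclude that these two arrows are weak equivalences on a cofibrant $J_N$-algebra $Z$, I would invoke the operadic change-of-base result \cite[4.10]{Harper_Hess}---already used in the excerpt to identify $J_1\circ_\capO(\tilde X)$ with $\tau_1\capO\circ^\hh_\capO(X)$---which says that a weak equivalence between operads (under $J_N$, satisfying the appropriate flatness hypotheses) induces a weak equivalence of operadic pushouts when evaluated on a cofibrant $J_N$-algebra.

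Naturality of the zigzag in $Z$ is immediate from functoriality of $(-)\circ_{J_N}(Z)$, and naturality in $n$---needed to upgrade the termwise zigzags to a zigzag of towers---follows from compatibility of the truncation maps $\tau_n\capO \to \tau_{n-1}\capO$ with the maps $J_n \to J_{n-1}$ of the factorization diagram and the truncations $\tau_n J_N \to \tau_{n-1} J_N$. The main obstacle I anticipate is checking that $\tau_n J_N$ satisfies the flatness hypotheses required by \cite[4.10]{Harper_Hess}, since $\tau_n J_N$ is not explicitly among the operads constructed by the factorization machinery of \cite[5.48]{Harper_Hess}; however, it inherits its structure from $J_N$ by level-wise truncation above level $n$, and the flat stable cofibrancy built into $J_N$ at each level via Assumption \ref{basic_assumption} should propagate through this truncation since the trivial $\capR$-module $*$ is itself flat stable cofibrant.
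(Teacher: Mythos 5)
Your proposal is correct and matches the paper's argument: the paper also interpolates through $\tau_n\capO$ via the operad weak equivalences $\tau_nJ_N\to\tau_n\capO\leftarrow J_n$, implementing the homotopy invariance you cite by inserting bar constructions, i.e.\ by comparing $|\BAR(\tau_nJ_N,J_N,Z)|\to|\BAR(\tau_n\capO,J_N,Z)|\leftarrow|\BAR(J_n,J_N,Z)|$ and then collapsing each outer term via $|\BAR(\capP,J_N,Z)|\xrightarrow{\wequiv}\capP\circ_{J_N}(Z)$ as in \cite[4.10]{Harper_Hess}. The only cosmetic difference is that the paper's zigzag therefore consists of four maps rather than your two.
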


\begin{proof}
There is a zigzag of weak equivalences of the form
\begin{align*}
  \tau_nJ_N\circ_{J_N}(Z)\xleftarrow{\wequiv}
  |\BAR(\tau_nJ_N,J_N,Z)|\xrightarrow{\wequiv}\\
  |\BAR(\tau_n\capO,J_N,Z)|\xleftarrow{\wequiv}
  |\BAR(J_n,J_N,Z)|\xrightarrow{\wequiv}J_n\circ_{J_N}(Z)
\end{align*}
natural in all such $Z$; we are freely using the ``$\BAR$'' notation from \cite[4.10]{Harper_Hess}.
\end{proof}

\begin{prop}
\label{prop:homotopy_fiber_sequence}
Let $Z$ be a cofibrant $J_N$-algebra. For each $2\leq n\leq N$, there is a homotopy fiber sequence of the form
\begin{align*}
  F_n(Z)\rightarrow J_n\circ_{J_N}(Z) \rightarrow J_{n-1}\circ_{J_N}(Z)
\end{align*}
in $\AlgO$, natural in all such $Z$, where $F_n(Z)$ is defined by
\begin{align}
\label{eq:homotopy_fiber}
  F_n(Z):= i_nJ_N\circ^\hh_{\tau_1J_N}\bigl(\tau_1J_1\circ_{J_1}J_1\circ_{J_N}(Z)\bigr)
\end{align}
\end{prop}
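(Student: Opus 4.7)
The plan is to reduce the homotopy fiber sequence to a natural pushout square of right $J_N$-modules obtained from the truncation filtration, and then identify the fiber term by a change-of-base argument.

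First, by the preceding proposition it suffices to prove the analogous statement with the tower $\{J_n\circ_{J_N}(Z)\}_n$ replaced by the weakly equivalent tower $\{\tau_n J_N\circ_{J_N}(Z)\}_n$ (using that $Z$ is cofibrant, so these circle products are homotopy invariant). There is a natural pushout square of symmetric sequences (equivalently, of right $J_N$-modules)
\begin{align*}
\xymatrix{
i_n J_N \ar[r]\ar[d] & \tau_n J_N \ar[d] \\
{*} \ar[r] & \tau_{n-1} J_N
}
\end{align*}
where $i_n J_N$ denotes $J_N[n]$ concentrated in arity $n$. Applying the derived functor $(-)\circ^\hh_{J_N}(Z)$ yields a homotopy pushout square of $\capO$-algebras, hence a homotopy cofiber sequence
\begin{align*}
i_n J_N \circ^\hh_{J_N}(Z) \rightarrow \tau_n J_N \circ^\hh_{J_N}(Z) \rightarrow \tau_{n-1} J_N \circ^\hh_{J_N}(Z).
\end{align*}
Since the underlying category $\ModR$ is stable and the forgetful functor creates homotopy (co)fibers, this cofiber sequence is simultaneously a homotopy fiber sequence in $\AlgO$.

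Second, identify the fiber term with $F_n(Z)$. The right $J_N$-module $i_n J_N$ is concentrated in arity $n$, and its right action can only be nontrivial through the arity-$1$ part of $J_N$ acting on each of the $n$ inputs (higher-arity operations would raise the arity and produce $0$). Hence a standard bar-construction manipulation in the style of \cite[4.10]{Harper_Hess} produces a natural zigzag of weak equivalences
\begin{align*}
i_n J_N \circ^\hh_{J_N}(Z) \wequiv i_n J_N \circ^\hh_{\tau_1 J_N}\bigl(\tau_1 J_N \circ^\hh_{J_N}(Z)\bigr).
\end{align*}
Using the weak equivalences of operads $\tau_1 J_N \wequiv \tau_1\capO \wequiv \tau_1 J_1$ together with the zigzag from the previous proposition at $n=1$, we rewrite the inner piece as
\begin{align*}
\tau_1 J_N \circ^\hh_{J_N}(Z) \wequiv \tau_1 J_1 \circ_{J_1}\bigl(J_1\circ_{J_N}(Z)\bigr),
\end{align*}
which yields the defining formula for $F_n(Z)$.

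The main obstacle is carrying out the base-change identification in the last step. Once reduced to bar constructions, it amounts to a multi-step chain of natural weak equivalences in the style of \cite[4.10]{Harper_Hess}, carefully tracking the right $\tau_1 J_N$-module structure on $i_n J_N$ and the compatible left-module structures coming from iterated derived indecomposables; the cofibrancy hypothesis on $Z$, together with Assumption \ref{basic_assumption}, is what guarantees each intermediate bar realization is homotopy invariant.
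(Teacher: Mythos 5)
Your overall strategy diverges from the paper's at exactly one point: the paper obtains the homotopy fiber sequence for the tower $\{\tau_nJ_N\circ_{J_N}(Z)\}_n$ by citing \cite[4.21]{Harper_Hess} (after passing between the towers $\{J_n\circ_{J_N}(Z)\}_n$ and $\{\tau_nJ_N\circ_{J_N}(Z)\}_n$ via the preceding proposition, just as you do), whereas you attempt to re-derive that result from the arity-wise pushout square $i_nJ_N\rightarrow\tau_nJ_N\rightarrow\tau_{n-1}J_N$ of right $J_N$-modules. That square is indeed a pushout of right modules, and your observation that the right $J_N$-action on $i_nJ_N$ factors through $\tau_1J_N$ is precisely what makes the cancellation $i_nJ_N\circ^\hh_{J_N}(Z)\wequiv i_nJ_N\circ^\hh_{\tau_1J_N}\bigl(\tau_1J_N\circ^\hh_{J_N}(Z)\bigr)$ work; your final base change along $\tau_1J_N\wequiv\tau_1J_1$ agrees with the paper's.

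Two points in your cofiber-to-fiber step need repair, however. First, the assertion that the forgetful functor $\AlgO\rightarrow\ModR$ ``creates homotopy (co)fibers'' is false for cofibers: colimits of $\capO$-algebras are not computed in the underlying category (only limits, filtered colimits, and reflexive coequalizers are). What saves the argument is that $(-)\circ_{J_N}(Z)$ applied to a pushout of \emph{right modules} produces a pushout of underlying $\capR$-modules, so the cofiber sequence lives in $\ModR$ from the outset; stability of $\ModR$, together with the fact that homotopy fibers \emph{are} created in $\ModR$, then yields the fiber sequence in $\AlgO$. Your square should therefore be called a homotopy pushout of $\capR$-modules, not of $\capO$-algebras. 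Second, and more substantively, you assert without argument that the derived functor $(-)\circ^\hh_{J_N}(Z)$ carries the strict pushout of right modules to a homotopy pushout. This homotopy-invariance claim is the technical heart of \cite[4.21]{Harper_Hess}: one must check, for instance levelwise in the bar construction $\BAR(-,J_N,Z)$, that the arity-wise split sequences $i_nJ_N[t]\rightarrow\tau_nJ_N[t]\rightarrow\tau_{n-1}J_N[t]$ remain homotopy cofiber sequences after forming $(-)\circ(W)$ and realizing. As written, your argument silently re-proves the cited result while omitting its only nontrivial ingredient; either supply that verification or, as the paper does, invoke \cite[4.21]{Harper_Hess} directly.
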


\begin{proof}
By Proposition \ref{prop:homotopy_fiber_sequence}, together with \cite[4.21]{Harper_Hess}, there is a homotopy fiber sequence of the form
\begin{align*}
  i_nJ_N\circ^\hh_{\tau_1J_N}\bigl(\tau_1J_N\circ_{J_N}(Z)\bigr)
  \rightarrow J_n\circ_{J_N}(Z) \rightarrow J_{n-1}\circ_{J_N}(Z)
\end{align*}
in $\AlgO$, natural in all such $Z$. Noting that the weak equivalence $\tau_1J_N\xrightarrow{\wequiv}\tau_1J_1$ induces a weak equivalence of the form
\begin{align*}
  i_nJ_N\circ^\hh_{\tau_1J_N}\bigl(\tau_1J_N\circ_{J_N}(Z)\bigr)
  &\wequiv
  i_nJ_N\circ^\hh_{\tau_1J_N}\bigl(\tau_1J_1\circ_{J_N}(Z)\bigr)\\
  &\wequiv
  i_nJ_N\circ^\hh_{\tau_1J_N}\bigl(\tau_1J_1\circ_{J_1}J_1\circ_{J_N}(Z)\bigr)
  \Equal F_n(Z)
\end{align*}
completes the proof.
\end{proof}

\begin{proof}[Proof of Theorem \ref{thm:nilpotent_TQ_completion_retract}]

Consider the commutative diagram of the form
\begin{align}
\label{eq:map_of_cosimplicial_objects_useful}
\xymatrix{
  F_n(\tilde{X})\ar[r]^-{(*)_n}\ar[d] &
  F_n\bigl(J_1\circ_{J_N}(\tilde{X})\bigr)\ar@<-0.5ex>[r]\ar@<0.5ex>[r]\ar[d] & 
  F_n\bigl(J_1\circ_{J_N} J_1\circ_{J_N}(\tilde{X})\bigr)
  \ar@<-1.0ex>[r]\ar[r]\ar@<1.0ex>[r]\ar[d] &
  \cdots\\
  J_n\circ_{J_N}(\tilde{X})\ar[r]^-{(\#)_n}\ar[d] &
  J_n\circ_{J_N}\bigl(J_1\circ_{J_N}(\tilde{X})\bigr)\ar@<-0.5ex>[r]\ar@<0.5ex>[r]\ar[d] & 
  J_n\circ_{J_N}\bigl(J_1\circ_{J_N} J_1\circ_{J_N}(\tilde{X})\bigr)
  \ar@<-1.0ex>[r]\ar[r]\ar@<1.0ex>[r]\ar[d] &
  \cdots\\
  J_{n-1}\circ_{J_N}(\tilde{X})\ar[r]^-{(\#)_{n-1}} &
  J_{n-1}\circ_{J_N}\bigl(J_1\circ_{J_N}(\tilde{X})\bigr)\ar@<-0.5ex>[r]\ar@<0.5ex>[r] & 
  J_{n-1}\circ_{J_N}\bigl(J_1\circ_{J_N} J_1\circ_{J_N}(\tilde{X})\bigr)
  \ar@<-1.0ex>[r]\ar[r]\ar@<1.0ex>[r] &
  \cdots
}
\end{align}
in $\AlgO$, for each $(2\leq n\leq N)$. We know that the coaugmented cosimplicial diagram in the top horizontal row has extra codegeneracy maps $s^{-1}$ (\cite[6.2]{Dwyer_Miller_Neisendorfer}), since it has the form of a functor $i_nJ_N\circ^\hh_{\tau_1J_N}\bigl(\tau_1J_1\circ_{J_1}-\bigr)$ applied to the coaugmented cosimplicial diagram of the form
\begin{align*}
\xymatrix{
J_{1}\circ_{J_N}(\tilde{X})\ar[r]^-{(\#)_{1}} &
  J_{1}\circ_{J_N}\bigl(J_1\circ_{J_N}(\tilde{X})\bigr)\ar@<-0.5ex>[r]\ar@<0.5ex>[r] & 
  J_{1}\circ_{J_N}\bigl(J_1\circ_{J_N} J_1\circ_{J_N}(\tilde{X})\bigr)
  \ar@<-1.0ex>[r]\ar[r]\ar@<1.0ex>[r] &
  \cdots
}
\end{align*}
in $\Alg_{J_1}$ which has extra codegeneracy maps $s^{-1}$ (\cite[6.2]{Dwyer_Miller_Neisendorfer}) in $\Alg_{J_1}$ induced by the map $J_N\rightarrow J_1$. It follows from cofinality \cite[3.16]{Dror_Dwyer_long_homology} that $\holim_\Delta(\#)_1$ and $\holim_\Delta(*)_n$ are weak equivalences for each $2\leq n\leq N$. Applying $\holim_\Delta$ to \eqref{eq:map_of_cosimplicial_objects_useful}, regarded as a commutative diagram of $\Delta$-shaped diagrams in $\AlgO$,  we have a commutative diagram of the form
\begin{align}
\label{eq:map_of_homotopy_fiber_sequences_finite_tower}
\xymatrix{
  F_n(\tilde{X})\ar[rr]^-{\holim_\Delta(*)_n}_-{\wequiv}\ar[d] && 
  \holim_\Delta\Bigl(F_n\bigl(J_1\circ_{J_N}(\tilde{X})\bigr)\Rightarrow\cdots\Bigr)\ar@<-13.0ex>[d]\\
  J_n\circ_{J_N}(\tilde{X})\ar[rr]^-{\holim_\Delta(\#)_n}\ar[d] && 
  \holim_\Delta\Bigl(J_n\circ_{J_N}\bigl(J_1\circ_{J_N}(\tilde{X})\bigr)\Rightarrow \cdots \Bigr)\ar@<-13.0ex>[d]\\
  J_{n-1}\circ_{J_N}(\tilde{X})\ar[rr]^-{\holim_\Delta(\#)_{n-1}} && 
  \holim_\Delta\Bigl(J_{n-1}\circ_{J_N}\bigl(J_1\circ_{J_N}(\tilde{X})\bigr)\Rightarrow\cdots\Bigr)
}
\end{align}
in $\AlgO$, with vertical columns homotopy fiber sequences, since homotopy limits commute, up to weak equivalence. Consider diagram \eqref{eq:map_of_homotopy_fiber_sequences_finite_tower} in the case $n=2$; then since the top and bottom horizontal maps are weak equivalences, it follows that the middle horizontal map $\holim_\Delta(\#)_2$ is a weak equivalence; we are using the fact that every homotopy fiber sequence has an associated long exact sequence in (derived or true) homotopy groups of spectra, together with the five lemma from homological algebra (see, for instance, \cite{Schwede_book_project, Schwede_homotopy_groups}). Hence, by induction up the tower, it follows that $\holim_\Delta(\#)_n$ is a weak equivalence for each $1\leq n\leq N$. In particular, $\holim_\Delta(\#)=\holim_\Delta(\#)_N$ is a weak equivalence, but this is precisely the map
$X\xrightarrow{\wequiv}X^\wedge_{\TQ|_{\Nil_{N+1}}}$
in diagram \eqref{eq:map_of_TQ_completion_maps}, which completes the proof.
\end{proof}

\begin{proof}[Proof of Theorem \ref{MainTheorem}]
The ``only if'' direction is trivial since $\TQ$-homology preserves weak equivalences by construction. Conversely, suppose $f$ induces a weak equivalence $\TQ(X)\wequiv\TQ(Y)$ on $\TQ$-homology. Consider the commutative diagram of the form (by Theorem \ref{thm:nilpotent_TQ_completion_retract})
\begin{align}
\label{eq:retraction_of_completion_map}
\xymatrix{
  X\ar[r]\ar[d]_-{f} & 
  X^\wedge_\TQ\ar[r]\ar[d]^-{(\#)}_-{\wequiv} & 
  X^\wedge_{\TQ|_{\Nil_{M}}}\ar[d] & 
  X\ar[l]_-{\wequiv}\ar[d]^-{f}\\
  Y\ar[r] & 
  Y^\wedge_\TQ\ar[r] & 
  Y^\wedge_{\TQ|_{\Nil_{M}}} & 
  Y\ar[l]_-{\wequiv}
}
\end{align}
Since $\TQ(X)\rightarrow\TQ(Y)$ is a weak equivalence, it follows that $(\#)$ is a weak equivalence, and hence $f$ is a weak equivalence; this is because diagram \eqref{eq:retraction_of_completion_map} is a retract diagram by \eqref{eq:map_of_TQ_completion_maps} after passing to the homotopy category $\Ho(\AlgO)$.
\end{proof}

The purpose of the rest of this section is to point out, for completeness, that a similar retract property is also true with $\TQ$-completion replaced by homotopy completion.

\begin{prop}
Let $Z$ be a $J_N$-algebra. There are maps of towers of the form
\begin{align*}
  \{\tau_n\capO\circ_{\capO}(Z)\}_n\rightarrow
  \{\tau_nJ_N\circ_{\capO}(Z)\}_n\rightarrow
  \{\tau_nJ_N\circ_{J_N}(Z)\}_n
\end{align*}
natural with respect to $Z$.
\end{prop}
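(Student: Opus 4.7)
The plan is to produce each of the two maps of towers levelwise and then observe that the constructions are compatible with the tower structure maps on both sides. Both maps should be essentially formal consequences of the functoriality of $\tau_n$ on operads and the standard change-of-operads natural transformation for the relative circle product.

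For the first map $\{\tau_n\capO\circ_\capO(Z)\}_n\to\{\tau_n J_N\circ_\capO(Z)\}_n$, I would start from the cofibration of operads $\capO\to J_N$ (the map labeled $(*)$ in \eqref{eq:fattening_up_the_truncation_tower}). Applying the truncation functor $\tau_n$ levelwise produces a morphism of operads $\tau_n\capO\to\tau_n J_N$ for each $n$, which in particular is a morphism of right $\capO$-modules via restriction along $\capO\to J_N$. Applying $(-)\circ_\capO(Z)$ yields the desired map at level $n$. Functoriality of $\tau_\bullet$ ensures that the squares involving the tower structure maps $\tau_{n+1}(-)\to\tau_n(-)$ commute, so these assemble into a map of towers; naturality in $Z$ is immediate from naturality of $\circ_\capO$ in its second argument.

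For the second map $\{\tau_n J_N\circ_\capO(Z)\}_n\to\{\tau_n J_N\circ_{J_N}(Z)\}_n$, I would invoke the standard change-of-operads natural transformation associated to $\capO\to J_N$. Explicitly, writing $M\circ_\capO N$ as the coequalizer of $M\circ\capO\circ N\rightrightarrows M\circ N$, if $M$ is a right $J_N$-module and $N$ a left $J_N$-module, then the operad map $\capO\to J_N$ induces a compatible map of parallel pairs landing in $M\circ J_N\circ N\rightrightarrows M\circ N$, and hence a comparison map between coequalizers $M\circ_\capO N\to M\circ_{J_N} N$. Taking $M=\tau_n J_N$ (a right $J_N$-module) and $N=Z$ (a $J_N$-algebra, hence an $\capO$-algebra by restriction) provides the map at level $n$. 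This construction is natural in $M$, so the structure maps $\tau_{n+1}J_N\to\tau_n J_N$ commute with it, giving a map of towers.

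The argument is essentially a definition chase, so I do not expect any substantive obstacle; the only thing to be careful about is that the right $\capO$-module structure on $\tau_n J_N$ used in defining the first map (via restriction along $\capO\to J_N$) agrees with the one that underlies the change-of-operads comparison in the second map, which it does by construction. Naturality in $Z$ is automatic from the functoriality of $(-)\circ_\capO(Z)$ and $(-)\circ_{J_N}(Z)$.
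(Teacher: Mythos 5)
Your proposal is correct and matches the paper's argument, which simply notes that the left-hand map is induced by $\tau_n\capO\rightarrow\tau_nJ_N$ and the right-hand map by the change of operads along $\capO\rightarrow J_N$; you have merely spelled out the same definition chase in more detail.
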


\begin{proof}
This is because the left-hand map is induced by $\tau_n\capO\rightarrow\tau_nJ_N$ and the right-hand map is induced by $\capO\rightarrow J_N$.
\end{proof}

It follows that there is a commutative diagram of towers
\begin{align*}
\xymatrix{
  \{\tilde{X}\}_n\ar[r]\ar@{=}[d] & 
  \{\tau_n\capO\circ_{\capO}(\tilde{X})\}_n\ar[d]\\
  \{\tilde{X}\}_n\ar[r] & 
  \{\tau_nJ_N\circ_{J_N}(\tilde{X})\}_n
}
\end{align*}
of $\capO$-algebras, where $\{\tilde{X}\}_n$ denotes the constant tower with value $\tilde{X}$, and applying $\holim_n$ gives the left-hand commutative diagram of the form \eqref{eq:map_of_homotopy_completion_maps}.

\begin{thm}[Nilpotent homotopy completion retract theorem]
Let $\capO$ be an operad in $\capR$-modules with $\capO[0]=*$. Let $N\geq 1$. If $X$ is an $(N+1)$-nilpotent $\capO$-algebra, then there is 
a left-hand commutative diagram of the form
\begin{align}
\label{eq:map_of_homotopy_completion_maps}
\xymatrix{
  X\ar[r]\ar@{=}[d] & X^\hwedge\quad\quad\quad\ar@<-4ex>[d]\\
  X\ar[r]^-{(*)}_-{\wequiv} & X^\hwedge|_{\Nil_{N+1}}
}\quad\quad
\xymatrix{
\\
  X\ar[r]\ar@/^1.5pc/[rr]^-{\id} & X^\hwedge\ar[r] & X
}
\end{align}
in $\AlgO$ 
with natural map $(*)$ a weak equivalence. In particular, the homotopy completion map $X\rightarrow X^\hwedge$ fits into the right-hand retract diagram in the homotopy category $\Ho(\AlgO)$, natural in $X$. Here we are assuming that $\capO$ satisfies Assumption \ref{basic_assumption}.
\end{thm}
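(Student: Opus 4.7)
The plan is to follow the same template as Theorem \ref{thm:nilpotent_TQ_completion_retract}: first verify that the comparison map $(*)\colon X \to X^\hwedge|_{\Nil_{N+1}}$ in the left-hand diagram of \eqref{eq:map_of_homotopy_completion_maps} is a weak equivalence; the right-hand retract diagram in $\Ho(\AlgO)$ then follows formally by composing $X \to X^\hwedge \to X^\hwedge|_{\Nil_{N+1}}$ with an inverse of $(*)$ in the homotopy category.

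The strategy for the key weak equivalence is more direct than in the $\TQ$-completion case: rather than resolving a cosimplicial object term-by-term, I will show directly that the defining tower $\{\tau_n J_N \circ_{J_N}(\tilde{X})\}_n$ of $X^\hwedge|_{\Nil_{N+1}}$ stabilizes on a cofinal tail. The key observation is that, up to weak equivalence, $J_N$ is an $N$-truncated operad: since the factorization $\capO \to J_N \xrightarrow{\wequiv} \tau_N\capO$ is constructed level-wise and $(\tau_N\capO)[t]=*$ for $t>N$, we have $J_N[t] \wequiv *$ for every $t>N$, and therefore for each $n \geq N$ the natural operad map $J_N \to \tau_n J_N$ is a level-wise weak equivalence (it is the identity on levels $t\leq n$ and a map between weakly contractible $\capR$-modules on levels $n<t$). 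Following the bar construction technique of the proposition preceding Theorem \ref{thm:nilpotent_TQ_completion_retract}, this induces, for each $n \geq N$, a zigzag of weak equivalences
\begin{align*}
  \tilde{X} \iso J_N \circ_{J_N}(\tilde{X}) \xleftarrow{\wequiv} |\BAR(J_N, J_N, \tilde{X})| \xrightarrow{\wequiv} |\BAR(\tau_n J_N, J_N, \tilde{X})| \xrightarrow{\wequiv} \tau_n J_N \circ_{J_N}(\tilde{X})
\end{align*}
natural in $\tilde{X}$, using that $\tilde{X}$ is cofibrant in $\Alg_{J_N}$.

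By cofinality \cite[3.16]{Dror_Dwyer_long_homology}, applied to the tail inclusion $\{n \geq N\} \hookrightarrow \{n \geq 1\}$, we then obtain $\holim_n \tau_n J_N \circ_{J_N}(\tilde{X}) \wequiv \holim_{n \geq N} \tau_n J_N \circ_{J_N}(\tilde{X}) \wequiv \tilde{X} \wequiv X$, which gives the desired weak equivalence $(*)$. The main obstacle I expect is ensuring that the above zigzag assembles compatibly with the tower structure maps $\tau_{n+1}J_N \to \tau_n J_N$ in a uniform way, so that we genuinely obtain a weak equivalence of the cofinal subtower $\{\tau_n J_N \circ_{J_N}(\tilde{X})\}_{n\geq N}$ with the constant tower $\{\tilde{X}\}_{n\geq N}$; this should parallel closely the bar-construction manipulations already employed in this section and follow without essential further difficulty.
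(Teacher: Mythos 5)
Your proposal is correct and follows essentially the same route as the paper: the paper's proof likewise reduces everything to observing that $(*)$ is a weak equivalence because $J_N\rightarrow\tau_nJ_N$ is a weak equivalence for each $n\geq N$, hence $\tilde{X}\rightarrow\tau_nJ_N\circ_{J_N}(\tilde{X})$ is a weak equivalence on the cofinal tail of the tower. Your bar-construction zigzag and the explicit cofinality step simply fill in details the paper leaves implicit; the compatibility with the tower maps that you flag as a potential obstacle is immediate since the coaugmentations $\tilde{X}\rightarrow\tau_nJ_N\circ_{J_N}(\tilde{X})$ already form a map from the constant tower.
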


\begin{proof}
It suffices to observe that $(*)$ is a weak equivalence. This follows from the fact that $\tilde{X}\xrightarrow{\wequiv}\tau_nJ_N\circ_{J_N}(\tilde{X})$ is a weak equivalence for each $n\geq N$; this follows from the observation that the map $J_N\xrightarrow{\wequiv}\tau_nJ_N$ is a weak equivalence for each $n\geq N$.
\end{proof}

\bibliographystyle{plain}
\bibliography{NilpotentStructured}

\end{document}